\title{\LARGE \bf
Linear Convergence of Primal-Dual Gradient Methods and their Performance in Distributed Optimization
}
\author{Sulaiman A. Alghunaim$^{*}$  and Ali H.~Sayed$^{\dagger}$, {\em Fellow}, IEEE
\thanks{This work  was  supported  in  part  by   grant  205121-184999 from the Swiss National Science Foundation.}
\thanks{$^{*}$S. A. Alghunaim is with the ECE Department, University of California at Los Angeles (UCLA). Email:{\tt\small salghunaim@ucla.edu}.
        }%
\thanks{$^{\dagger}$A. H. Sayed is with the Ecole Polytechnique Federale de Lausanne
EPFL, School of Engineering, CH-1015 Lausanne, Switzerland e-mail:
        {\tt\small ali.sayed@epfl.ch}.}%
}
\begin{document}


\maketitle
\thispagestyle{plain}
\pagestyle{plain}

\begin{abstract}
In this work, we revisit a  classical incremental implementation of the primal-descent dual-ascent gradient method used for the solution of equality constrained optimization problems. We provide a short proof that establishes the linear (exponential) convergence of the algorithm for smooth strongly-convex cost functions and study its relation to the non-incremental implementation.   We also  study the effect of the augmented Lagrangian penalty term on the performance of distributed optimization algorithms for the minimization of aggregate cost functions over multi-agent networks.
\end{abstract}

\begin{IEEEkeywords}
Primal-dual methods, linear convergence, Arrow-Hurwicz, augmented Lagrangian, distributed optimization. 
\end{IEEEkeywords}

\section{Introduction}
Consider the  constrained optimization problem:
\begin{align}
 \underset{w\in \mathbb{R}^M}{\text{minimize   }}& \quad
  J(w), \quad {\rm s.t.} \ Bw=b\label{glob1} 
\end{align}
where $J(w):\real^M \rightarrow \real$ is a smooth function assumed to satisfy Assumption \ref{assump:cost} further ahead, $B \in \real^{E \times M}$, and $b \in \real^E$.  Consider also the saddle point problem:
\eq{
\min_{w\in \mathbb{R}^M} \max_{\lambda \in \real^E} \ L_\rho(w,\lambda)  \label{saddle_problem}
}
where
\eq{
L_\rho(w,\lambda) \define J(w)+{\rho \over 2} \|Bw-b\|^2+\lambda\tran(Bw-b) \label{AL_rho}
}
is the augmented Lagrangian of problem \eqref{glob1},  $\lambda$ is a dual variable, and $\rho \geq 0$ is the augmented Lagrangian penalty parameter. Note that for $\rho=0$,  $L_0(w,\lambda)$ becomes the classical Lagrangian of problem \eqref{glob1}.   If a point $(w^\star,\lambda^\star)$ exists that solves  \eqref{saddle_problem},
then $w^\star$ is an optimal solution to the constrained problem when strong duality holds, which is the case under our assumptions \cite{boyd2004convex}.  {\color{black} A classical algorithm that solves \eqref{saddle_problem} is the  primal-dual (PD) gradient algorithm \eqref{alg_pd_da_inc}. In this algorithm,  $\grad J_\rho(w)$ denotes the gradient of $J_\rho(w)=J(w)+{\rho \over 2} \|Bw-b\|^2$ evaluated at $w$ and $(\mu_w,\mu_\lambda)$ are positive step-sizes (learning rates) chosen by the designer. The updates in \eqref{alg_pd_da_inc} are primal-descent dual-ascent steps applied to  \eqref{AL_rho} and it subsume the classical Lagrangian implementation  when $\rho=0$ and the augmented Lagrangian implementation when $\rho>0$.  Note that the updates in \eqref{alg_pd_da_inc} are {\em incremental} since the dual update \eqref{dual-ascent} uses the most recent primal variable $w_i$ and not $w_{i-1}$. If the dual update uses the previous primal iterate $w_{i-1}$, then we refer to the update as non-incremental.}

  This work provides a concise proof that establishes the linear convergence of recursion \eqref{alg_pd_da_inc} and studies its relation to the non-incremental implementation. We also study the effect of the penalty term 
 ${\rho \over 2} \|Bw-b\|^2$ on the performance of multi-agent consensus optimization algorithms.   Algorithms of the form \eqref{alg_pd_da_inc} have been applied in various applications including  wireless systems \cite{chen2012convergence}, power systems \cite{cherukuri2016initialization}, reinforcement learning \cite{macua2015distributed}, and network utility maximization \cite{feijer2010stability}. 
 
\begin{algorithm}[t] 
\caption*{\textrm{\bf{Algorithm}} (Incremental PD gradient method)}
{\bf Setting}:  Let $J_{\rho}(w)=J(w)+{\rho \over 2} \|Bw-b\|^2$ for some $\rho \geq 0$ and choose positive step-sizes $\mu_w$ and $\mu_\lambda$. Let $w_{-1}$ and $\lambda_{-1}$ be arbitrary initial conditions and repeat for $i \geq 0$ 
\begin{subequations}
\label{alg_pd_da_inc}
\eq{
w_i&=w_{i-1}-\mu_w \big(\grad J_{\rho}(w_{i-1})+ B\tran \lambda_{i-1}\big) \label{primal-descent} \\
\lambda_i &= \lambda_{i-1}+\mu_\lambda (Bw_i-b) \label{dual-ascent}
}
 \end{subequations}
\end{algorithm}

\subsection{Related Works}
 There exists a large body of literature on primal-dual saddle-point algorithms -- see \cite{arrow1958studies,kose1956solutions
 ,polyak1970iterative,bertsekas2014constrained,kallio1999large,
 nedic2009subgradient,feijer2010stability
 ,wang2011control,cherukuri2016asymptotic} and the references therein, including the seminal work \cite{arrow1958studies}, which proposed recursions of the type \eqref{alg_pd_da_inc} and established their convergence. These works focus on proving convergence to  an optimal solution without providing convergence rates, provide  sub-linear convergence rates (e.g.,  ${1 \over i}$ where $i$ is the iteration index), or show linear convergence from a starting point that is sufficiently close to a solution (local convergence). Some other works examined global linear convergence under different settings.

 The works \cite{niederlander2016distributed,dhingra2018proximal} focuses on {\em continuous} versions  of the primal-dual gradient dynamics and establish linear convergence for {\em augmented Lagrangian} implementations (i.e., they require the presence of the augmented Lagrangian term $\rho/2 \|Bw-b\|^2$, where $\rho$ is strictly positive). They also require $B$ to have full row rank. Similarly, the work \cite{qu2019exponential} establishes linear convergence for {\em continuous} primal-dual gradient dynamics for full row rank $B$, but it does not require the presence of the augmented Lagrangian term. Moreover, it was shown in \cite{qu2019exponential} that if the continuous dynamics is discretized using Euler discretization, then the discrete version converges linearly under small enough step sizes.  However, no upper bound is given on the step-sizes.  Moreover,  Euler discretization  uses identical step-sizes for the primal and dual updates (i.e., $\mu_w=\mu_\lambda$) and results in a {\em non-incremental} primal-dual dynamics.   Therefore, the results in \cite{niederlander2016distributed,dhingra2018proximal,qu2019exponential}  are not directly applicable to the discrete {\em incremental} implementation \eqref{alg_pd_da_inc} and do not provide clear bounds on the step-sizes.

{\color{black} 
We remark that linear convergence for various monotone operator methods have been established {\em albeit} under other conditions that are not satisfied in our setup. For example,  the linear convergence results in \cite{chen1997convergence} and \cite[Proposition 25.9]{bauschke2011convex} for forward-backward splitting methods  would require the saddle-point problem \eqref{saddle_problem} to be {\em both} strongly-convex with respect to $w$ and strongly-concave with respect to $\lambda$. This holds for example for problems with Lagrangian  $ L(w,\lambda) = J(w)+\lambda\tran Bw-g(\lambda)$ where  $J(w)$ and $g(\lambda)$ are both strongly-convex functions.  Similarly, the conditions used  in \cite{boct2015convergence,chambolle2016ergodic,Davis2017}  require the saddle-point problem \eqref{saddle_problem} to be strongly-convex with respect to $w$ and strongly-concave with respect to $\lambda$.  In our setup, $L_\rho(w,\lambda)$ is not strongly-concave with respect to $\lambda$. 

  The work \cite{du2018linear} showed that for saddle point problems with $ L(w,\lambda) = J(w)+\lambda\tran Bw-g(\lambda)$, linear convergence is possible without requiring the Lagrangian to be both strongly-convex and strongly-concave. In particular, it established linear convergence when the primal function $J(w)$ is smooth and convex, the dual function $-g(\lambda)$ is smooth and strongly-concave, and the additional assumption that $B$ is a {\em full column rank} matrix. Unlike the current work, the algorithm analyzed in \cite{du2018linear} is {\em non-incremental}; moreover, particular fixed step-sizes are needed to establish linear convergence --  \cite[~Theorem 3.1]{du2018linear}.}

Now, in the distributed optimization literature, various incremental primal-dual gradient algorithms have been proposed to solve multi-agent consensus optimization problems  -- see \cite{ling2015dlm,
chang2015multi,nedic2017achieving,
yuan2019exactdiffI,jakovetic2019unification} and references therein, which are mostly based on AL formulations. They have been shown to achieve linear convergence under strong-convexity even though the consensus constraint matrix is not full rank.  However, the analysis techniques used to establish the convergence of these methods either depend on the particular consensus constraint matrix and/or {\color{black} require the AL term to be strictly positive.   Unlike these works, our analysis does not require $\rho$ to be strictly positive.} Moreover, due to our unified Lagrangian and AL framework, we clarify the effect of the AL penalty term on the performance of these types of distributed algorithms.  Note that  the work \cite{towfic2015stabilitypd} studied  non-incremental primal-dual methods with identical step-sizes for {\em quadratic} distributed optimization.   It was found  in \cite{towfic2015stabilitypd}  that unlike AL methods, Lagrangian methods suffer from stability issues when the individual costs are not strongly-convex. Unlike \cite{towfic2015stabilitypd},  we study the affect of the AL penalty on the convergence rate of distributed algorithms.
 \subsection{Contribution}
 Given the above, this work has two main contributions:  {\bf I)} Through an original proof,  we establish the linear convergence of the {\em incremental} implementation \eqref{alg_pd_da_inc}. {\color{black} Moreover, we  show how the non-incremental implementation is related to the incremental one and establish its linear convergence while providing explicit  upper bounds on the step-sizes. Our proof technique does not require the AL parameter to be strictly positive nor do we require $B$ to have full row rank. }   {\bf II)}    We show the effect of the AL penalty term on the performance of distributed multi-agent optimization algorithms. {\color{black} Depending on the condition number of the agents' costs, we provide scenarios where the AL term is beneficial and other scenarios where it is not beneficial. }

 {\bf Notation and Terminology:} For a matrix $A \in \real^{M \times N}$, $\sigma_{\max}(A)$ denotes the maximum singular value  of $A$, $\sigma_{\min}(A)$ denotes the minimum singular value  of $A$, and  $\underline{\sigma}(A)$ denotes the smallest {\em non-zero} singular value. For a vector $x \in \real^M$ and  a positive constant $c>0$, we let $\|x\|_c^2$ denote the weighted norm $c\|x\|^2$. {\color{black} For any positive semidefinite matrix $A \in \real^{M \times M}$ the square root $A^{1 \over 2}$ is the solution of $X^2=A$. A function $f(x): \real^M \rightarrow \real$ is $\delta$-smooth if $
\|\grad f(x)-\grad f(y)\| \leq \delta \|x-y\|$ for any $x,y$ and some $\delta>0$. A smooth function $f(x)$ is $\nu$-strongly-convex if $(x-y)\tran \big(\grad f(x)-\grad f(y)\big) \geq \nu \|x-y\|^2$ for any $x,y$ and some $\nu>0$.}
\section{Auxiliary Results}
This section gives the auxiliary results leading to the main convergence result. We start with the following condition on the cost function.{\color{black}
\begin{assumption} \label{assump:cost}
{ {\bf (Cost function)}: It is assumed that a unique solution $w^\star$ exists for problem \eqref{glob1} and the cost function $J(w)$ is convex. It is also assumed that  $J(w)$ is  $\delta$-smooth, consequently, $J_{\rho}(w)=J(w)+{\rho \over 2} \|Bw-b\|^2$ is $\delta_\rho$-smooth with $\delta_\rho = \delta+\rho \sigma^2_{\max}(B)$. 
Moreover, the cost $J_{\rho}(w)$ is $\nu_{\rho}$-strongly-convex with respect to $w^\star$, namely,
\eq{
\hspace{-2mm} (x-w^\star)\tran \big(\grad J_\rho(x)-\grad J_\rho(w^\star)\big) \geq \nu_{\rho} \|x-w^\star\|^2, \quad \forall \ x  \label{stron-convexity}
} 
\noindent The  scalars satisfy  $0<\nu_{\rho} \leq \delta_{\rho}$ for any $\rho \geq 0$.  }\qd
\end{assumption} 
\begin{remark}[\sc Strong-convexity]{\rm
 If $J(w)$ is $\nu$-strongly-convex, then $w^\star$ is unique \cite[Example 5.4]{boyd2004convex} and condition \eqref{stron-convexity} will be satisfied with $\nu_\rho=\nu$. We remark that condition \eqref{stron-convexity} does not necessarily imply that $J(w)$ is strongly-convex w.r.t. $w^\star$ unless $\rho=0$. This condition is used instead of typical strong-convexity to be consistent with the conditions used to study the effect of the augmented Lagrangian term on the performance of distributed algorithms in Section \ref{section:dist}. } 
\qd
\end{remark}
}
\noindent  It is known that a pair $(w^\star,\lambda^\star)$ is an optimal solution to \eqref{saddle_problem} if, and only if, it satisfies the optimality conditions \cite{boyd2004convex}:
\begin{subequations} \label{KKT}
\eq{
\grad J(w^\star)+B\tran \lambda^\star&=0 \label{kkt1}\\
Bw^\star-b&=0 \label{kkt2}
}
\end{subequations}
  From \eqref{kkt1} and uniqueness of $w^\star$,  $\lambda^\star$ will be unique if $B$ has full row rank. In general $\lambda^\star$ is not necessarily unique. Motivated by \cite{shi2014onthe}, we will characterize a particular dual solution that we later show convergence to. For that result and later analysis, we need the following result.
\begin{lemma} \label{lemma:lower_bound}
{  If $\lambda_x$ is in the range space of $B \in \real^{E \times M}$, then it holds that:
\eq{
 \|B\tran \lambda_x\|^2 \geq \underline{\sigma}^2(B)\|\lambda_x\|^2 \label{lower_B_bound}
}
}
\end{lemma}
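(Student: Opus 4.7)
The plan is to invoke the singular value decomposition of $B$ and use it to reduce the inequality to a straightforward statement about diagonal matrices. Write $B = U \Sigma V\tran$, where $U \in \real^{E \times E}$ and $V \in \real^{M \times M}$ are orthogonal and $\Sigma \in \real^{E \times M}$ is the diagonal matrix of singular values. Let $r = \mathrm{rank}(B)$ and split $U = [U_r \ \ U_{\perp}]$, where the columns of $U_r \in \real^{E \times r}$ correspond to the strictly positive singular values. Then the range space of $B$ coincides with the column span of $U_r$.

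Next, since by hypothesis $\lambda_x$ lies in the range space of $B$, I can write $\lambda_x = U_r c$ for some $c \in \real^{r}$. Using orthonormality, $\|\lambda_x\|^2 = \|c\|^2$. Now apply $B\tran$ to this representation: with $\Sigma_r \in \real^{r \times r}$ the diagonal matrix of the $r$ non-zero singular values, a direct computation with the SVD gives $B\tran \lambda_x = V_r \Sigma_r c$, where $V_r$ collects the corresponding columns of $V$. Then orthonormality of the columns of $V_r$ yields $\|B\tran \lambda_x\|^2 = \|\Sigma_r c\|^2$.

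Finally, since $\Sigma_r$ is diagonal with entries all bounded below by $\underline{\sigma}(B)$ (the smallest non-zero singular value of $B$ by definition), I conclude $\|\Sigma_r c\|^2 \geq \underline{\sigma}^2(B) \|c\|^2 = \underline{\sigma}^2(B) \|\lambda_x\|^2$, which is the desired inequality \eqref{lower_B_bound}.

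I do not anticipate any real obstacle here: the lemma is essentially a restatement of the fact that $B\tran$ restricted to the orthogonal complement of its kernel (equivalently, to the range of $B$) is invertible with smallest singular value $\underline{\sigma}(B)$. The only subtle point is to remember that the bound fails in general for $\lambda_x \notin \mathrm{range}(B)$ (since a component in $\ker(B\tran)$ would be annihilated), which is precisely why the hypothesis on $\lambda_x$ is imposed.
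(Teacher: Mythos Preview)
Your proof is correct and follows essentially the same SVD-based approach as the paper: both reduce the inequality to a bound on a diagonal matrix of non-zero singular values. The only cosmetic difference is that the paper works with the truncated eigendecomposition of $B\tran B$ and writes $\lambda_x = Bx$, whereas you work directly with the SVD of $B$ and express $\lambda_x = U_r c$ in the orthonormal basis of $\mathrm{range}(B)$; your route is arguably slightly more direct, but the content is the same.
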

\begin{proof}   Introduce the truncated singular value decomposition \cite{Laub2004} of the positive semi-definite matrix $B\tran B =U_r \Sigma_r U_r\tran$, where $U_r \in \real^{M \times r}$ ($r$ denotes the rank of $B\tran B$) with $U_r\tran U_r=I_r$ and $\Sigma_r>0$ is a diagonal matrix with entries equal to the non-zero eigenvalues of $B\tran B$ ( i.e., the squared non-zero singular values of $B$). Since $\lambda_x$ is in the range space of $B$, it holds that  $\lambda_x=Bx$ for some $x$.     {\color{black} Thus, if we let  $u=\Sigma^{1 \over 2}_r U_r\tran x$, then 
\eq{
\hspace{-2mm} \|B\tran \lambda_x\|^2&=\|B\tran B x\|^2 =x\tran U_r\Sigma^2_r U_r\tran x \nonumber \\
 &=u\tran \Sigma_r u \geq \underline{\sigma}^2(B) \| u\| ^2=\underline{\sigma}^2(B) x\tran U_r\Sigma_r U_r\tran x 
\label{lower_bound_steps}}
The result follows since $x\tran U_r\Sigma_r U_r\tran x=\|\lambda_x\|^2$. The inequality follows since $\underline{\sigma}^2(B)$ is the smallest eigenvalue (or diagonal entry) of $\Sigma_r$ -- see \cite[Appendix A.5.2]{boyd2004convex}.}
\end{proof}
\begin{lemma} \label{lemma:unique_dual}
 {\bf (Particular dual $\lambda^\star_b$)}: There exists a unique optimal dual variable, denoted by $\lambda^\star_b$,  lying in the range space of $B$.  
\end{lemma}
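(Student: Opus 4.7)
The plan is to exploit the fact that, because $w^\star$ is unique (Assumption \ref{assump:cost}), the KKT condition \eqref{kkt1} reduces to the fixed linear equation $B\tran\lambda = -\grad J(w^\star)$ in the dual variable. So the set of dual solutions is simply the affine subspace $S=\{\lambda\in\real^E : B\tran\lambda = -\grad J(w^\star)\}$. Showing existence and uniqueness of a dual optimum in the range space of $B$ reduces to showing that $S \cap \textrm{range}(B)$ is a singleton.

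For existence, I would start from the standing assumption that \eqref{saddle_problem} admits a saddle point $(w^\star,\lambda^\star)$, so $S$ is nonempty. Using the orthogonal decomposition $\real^E = \textrm{range}(B)\oplus\textrm{null}(B\tran)$, I would split $\lambda^\star = \lambda^\star_b + \lambda^\star_n$ with $\lambda^\star_b\in\textrm{range}(B)$ and $\lambda^\star_n\in\textrm{null}(B\tran)$. Substituting into \eqref{kkt1} kills the null-space component, leaving $B\tran\lambda^\star_b = -\grad J(w^\star)$. Together with \eqref{kkt2}, which only involves $w^\star$, this shows $(w^\star,\lambda^\star_b)$ is itself a saddle point with $\lambda^\star_b$ in the range space of $B$.

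For uniqueness, I would suppose that $\lambda_1,\lambda_2\in S\cap\textrm{range}(B)$. Their difference $\lambda_x = \lambda_1-\lambda_2$ lies in $\textrm{range}(B)$ and satisfies $B\tran\lambda_x=0$. This is exactly the scenario of Lemma \ref{lemma:lower_bound}, which gives
\begin{equation*}
0 = \|B\tran\lambda_x\|^2 \geq \underline{\sigma}^2(B)\,\|\lambda_x\|^2.
\end{equation*}
Since $\underline{\sigma}(B)>0$ (the smallest \emph{nonzero} singular value is strictly positive whenever $B\neq 0$; the degenerate case $B=0$ forces $\lambda^\star_b=0$ trivially), we conclude $\lambda_x=0$, hence $\lambda_1=\lambda_2$. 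Defining $\lambda^\star_b$ as this common vector gives the claimed unique range-space dual solution.

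I do not expect any real obstacle: the argument is essentially a linear-algebra decomposition, with Lemma \ref{lemma:lower_bound} doing all the work for uniqueness. The only mild subtlety is making sure the existence step correctly preserves \emph{both} KKT relations after discarding the null-space component of $\lambda^\star$, which is immediate because \eqref{kkt2} does not involve $\lambda$ at all and \eqref{kkt1} is linear in $\lambda$.
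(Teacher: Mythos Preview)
Your proposal is correct and follows essentially the same approach as the paper: the existence argument via the orthogonal decomposition $\real^E=\mathrm{range}(B)\oplus\mathrm{null}(B\tran)$ is identical, and for uniqueness both you and the paper show that a range-space vector annihilated by $B\tran$ must vanish. The only cosmetic difference is that you invoke Lemma~\ref{lemma:lower_bound} for this last step, whereas the paper argues directly by writing $\lambda^\star_{b_j}=Bx_j$ and observing that $B\tran B(x_1-x_2)=0$ forces $\|B(x_1-x_2)\|^2=0$; these are the same fact.
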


\begin{proof} The argument is motivated by \cite{shi2014onthe}.  Any solution $\lambda^\star$ of the linear system of equations given in \eqref{kkt1} can be decomposed into two parts $\lambda^\star=\lambda^\star_b+\lambda_n^\star$, where $\lambda^\star_b \in {\rm Range}(B)$ and $\lambda_n^\star \in {\rm Null}(B\tran)$ -- see \cite{Laub2004}. Therefore, if $(w^\star,\lambda^\star)$ satisfies \eqref{KKT}, then $(w^\star,\lambda^\star_b)$ also satisfies  \eqref{KKT}. We now show $\lambda^\star_b$ is unique by contradiction. Assume we have two {\em distinct} dual solutions $\lambda^\star_{b_1}= Bx_1$ and $\lambda^\star_{b_2}=Bx_2$ lying in the range space of $B$. Then, substituting into  \eqref{kkt1} and subtracting, we get $B\tran B(x_1-x_2)=0$.  It follows that $\|B(x_1-x_2)\|^2=0$ and, consequently, $B(x_1-x_2)=0$. This means that $\lambda^\star_{b_1}=Bx_1=Bx_2=\lambda^\star_{b_2}$, which is a contradiction.  
\end{proof}

\noindent   {\color{black} Note that if $\lambda_{i-1}$ belongs to the range space of $B$ (i.e., $\lambda_{i-1}=B x$ for some $x$) or  $\lambda_{i-1}=0$, then from $b=Bw^\star$ and  \eqref{dual-ascent} we know that $\lambda_i = \lambda_{i-1}+\mu_\lambda (Bw_i-b)=B \big(x+\mu_\lambda (w_i-w^\star)\big)$ will remain in the range space of $B$.}  Thus, $\{\lambda_{i}\}_{i \geq 0}$ will always remain in the range space of $B$ if $\lambda_{-1}$ belongs  to the range space of $B$ or $\lambda_{-1}=0$. This observation will  allow us to utilize the bound \eqref{lower_bound_steps} to  establish linear convergence to the particular saddle-point $(w^\star,\lambda^\star_b)$ without requiring a rank condition on the matrix $B$. 
\section{LINEAR CONVERGENCE RESULT}
 We are now ready to establish our main result. Let $\tw_i\define w_i-w^\star$ and $\tlam_i \define \lambda_i - \lambda^\star_b$ denote the primal and dual errors, respectively. 
\begin{theorem}{ {\bf (Linear convergence)}: \label{theorem1}
Let Assumption \ref{assump:cost} holds and assume the step-sizes are positive and satisfy:
\eq{
\mu_w  < {1  \over \delta_\rho}, \quad
 \mu_\lambda \leq {\nu_{\rho} \over \sigma^2_{\max}(B) } \label{step_sizes}
 }
 If $\lambda_{-1}=0$, then algorithm \eqref{alg_pd_da_inc} converges linearly to the particular saddle-point $(w^\star,\lambda^\star_b)$, namely, it holds that
\eq{
\|\tw_i\|_{c_w}^2+ \|\tlam_i\|_{c_\lambda}^2 
&\leq \gamma \big(\|\tw_{i-1}\|_{c_w}^2+\|\tlam_{i-1}\|_{c_\lambda}^2\big) 
\label{theorem_exp}}
where    $c_\lambda  >0$, 
$c_w=1-\mu_w \mu_\lambda \sigma^2_{\max}(B)>0$, and
$$\gamma \define \max \left\{1-\mu_w \nu_{\rho} (1-\mu_w \delta_\rho ),1-\mu_w \mu_\lambda \underline{\sigma}^2(B) \right\} <1$$
 } 
\end{theorem}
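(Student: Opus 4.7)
The plan is to construct a weighted Lyapunov function $V_i = c_w \|\tw_i\|^2 + c_\lambda \|\tlam_i\|^2$ and establish a one-step contraction $V_i \leq \gamma V_{i-1}$ by direct expansion of the squared error norms. First I would subtract the KKT conditions \eqref{KKT} evaluated at the particular saddle point $(w^\star,\lambda^\star_b)$ (which exists by Lemma~\ref{lemma:unique_dual}) from \eqref{primal-descent} and \eqref{dual-ascent}. Using $\grad J_\rho(w^\star)=\grad J(w^\star)$ (because $Bw^\star=b$) and defining $g_{i-1}\define \grad J_\rho(w_{i-1})-\grad J_\rho(w^\star)$, this produces $\tw_i=\tw_{i-1}-\mu_w(g_{i-1}+B\tran \tlam_{i-1})$ and $\tlam_i=\tlam_{i-1}+\mu_\lambda B\tw_i$. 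Since $\lambda_{-1}=0\in\mathrm{Range}(B)$, the observation preceding the theorem guarantees $\tlam_{i-1}\in\mathrm{Range}(B)$ for all $i$, so Lemma~\ref{lemma:lower_bound} is available for the dual side.

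The central trick is to choose the relative weight $c_\lambda=\mu_w/\mu_\lambda$, which forces the indefinite cross terms $\tw_{i-1}\tran B\tran \tlam_{i-1}$ arising in the separate expansions of $\|\tw_i\|^2$ and $\|\tlam_i\|^2$ to cancel. After substituting the primal recursion into the term $\tlam_{i-1}\tran B\tw_i$ and expanding $\|g_{i-1}+B\tran \tlam_{i-1}\|^2$, the mixed $g_{i-1}\tran B\tran \tlam_{i-1}$ terms also cancel, and I expect to land on the clean identity $\|\tw_i\|^2+c_\lambda\|\tlam_i\|^2=\|\tw_{i-1}\|^2+c_\lambda\|\tlam_{i-1}\|^2-2\mu_w \tw_{i-1}\tran g_{i-1}+\mu_w^2\|g_{i-1}\|^2-\mu_w^2\|B\tran\tlam_{i-1}\|^2+\mu_w\mu_\lambda\|B\tw_i\|^2$.

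From here the remaining steps are mechanical. Bounding $\|B\tw_i\|^2\leq \sigma_{\max}^2(B)\|\tw_i\|^2$ and moving the resulting term to the left converts the coefficient of $\|\tw_i\|^2$ into exactly $c_w=1-\mu_w\mu_\lambda \sigma_{\max}^2(B)$, which is positive under \eqref{step_sizes}. For the gradient terms, strong convexity \eqref{stron-convexity} together with the Baillon--Haddad cocoercivity estimate $\|g_{i-1}\|^2\leq \delta_\rho\, \tw_{i-1}\tran g_{i-1}$ (valid because $J_\rho$ is convex and $\delta_\rho$-smooth) combine to yield $-2\mu_w \tw_{i-1}\tran g_{i-1}+\mu_w^2 \|g_{i-1}\|^2\leq -\mu_w\nu_\rho(2-\mu_w\delta_\rho)\|\tw_{i-1}\|^2$, which is negative since $\mu_w<1/\delta_\rho$. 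Invoking Lemma~\ref{lemma:lower_bound} produces the dual contraction factor $(1-\mu_w\mu_\lambda\underline{\sigma}^2(B))$ on $c_\lambda\|\tlam_{i-1}\|^2$.

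The main obstacle will be the final bookkeeping that matches the precise $\gamma$ in the theorem: I must show that the residual primal coefficient $1-\mu_w\nu_\rho(2-\mu_w\delta_\rho)$ is dominated by $c_w\cdot(1-\mu_w\nu_\rho(1-\mu_w\delta_\rho))$. Algebraic rearrangement reduces this inequality to $\mu_\lambda\sigma_{\max}^2(B)\big(1-\mu_w\nu_\rho(1-\mu_w\delta_\rho)\big)\leq \nu_\rho$, which holds because the bracketed quantity is at most $1$ and the step-size assumption delivers $\mu_\lambda\sigma_{\max}^2(B)\leq \nu_\rho$. This is precisely where the upper bound on $\mu_\lambda$ is consumed. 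Taking $\gamma$ to be the maximum of the primal and dual rates then establishes the claimed contraction.
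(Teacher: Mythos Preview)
Your proposal is correct and follows essentially the same route as the paper: the same error recursions, the same weight $c_\lambda=\mu_w/\mu_\lambda$ to cancel the cross terms, the same use of cocoercivity plus \eqref{stron-convexity} to obtain the $1-\mu_w\nu_\rho(2-\mu_w\delta_\rho)$ primal factor, the same appeal to Lemma~\ref{lemma:lower_bound} for the dual factor, and the same final inequality $\mu_\lambda\sigma_{\max}^2(B)\big(1-\mu_w\nu_\rho(1-\mu_w\delta_\rho)\big)\leq \nu_\rho$ to convert the raw primal coefficient into $c_w\gamma_1$. The only cosmetic difference is that the paper keeps $\|\tw_{i-1}-\mu_w g_{i-1}\|^2$ unexpanded until after the cross terms cancel, whereas you expand it immediately; the resulting identity and subsequent bounds are identical.
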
 
\begin{proof}
  Subtracting $w^\star$ and $\lambda^\star_b$ from both sides of \eqref{alg_pd_da_inc} and using the optimality conditions \eqref{KKT} we get the coupled error recursion:
\begin{subequations}
\eq{
\hspace{-1mm }\tw_i&=\tw_{i-1}-\mu_w \big(\grad J_\rho(w_{i-1})-\grad J_\rho(w^\star)+ B\tran \tlam_{i-1}\big) \label{error_primal}  \\
\hspace{-1mm }\tlam_i &= \tlam_{i-1}+\mu_\lambda B\tw_i \label{error_dual}
}
\end{subequations}
 Squaring both sides of \eqref{error_primal} and \eqref{error_dual} we get
\eq{
\hspace{-1mm } \|\tw_i\|^2&=\|\tw_{i-1}-\mu_w \big(\grad J_\rho(w_{i-1})-\grad J_\rho(w^\star)\big)\|^2 \nonumber \\
& \ -2 \mu_w  \tlam_{i-1}\tran B \left(\tw_{i-1}-\mu_w\big( \grad J_\rho(w_{i-1})- \grad J_\rho(w^\star)\big)\right) \nonumber \\ 
& \quad    +\mu_w^2 \| B\tran \tlam_{i-1}\|^2
\label{er_sq_primal}}
and
\eq{
\hspace{-1mm } \|\tlam_i\|^2 &= \|\tlam_{i-1}\|^2+\mu_\lambda^2 \| B\tw_i\|^2 + 2 \mu_\lambda \tlam_{i-1}\tran B \tw_i \nonumber \\
&\overset{\eqref{error_primal}}{=} \|\tlam_{i-1}\|^2+\mu_\lambda^2\| B\tw_i\|^2 - 2 \mu_\lambda \mu_w   \|B\tran \tlam_{i-1}\|^2 \nonumber \\ 
& \quad +2 \mu_\lambda   \tlam_{i-1}\tran B \left(\tw_{i-1}-\mu_w\big( \grad J_\rho(w_{i-1})- \grad J_\rho(w^\star)\big)\right) \label{er_sq_dual}
}
Using the bound $\| B\tw_i\|^2 \leq \sigma^2_{\max}(B) \| \tw_i\|^2$, multiplying equation \eqref{er_sq_dual} by $c_\lambda\define \mu_w / \mu_\lambda$ and adding to \eqref{er_sq_primal} gives:
\eq{
\|\tw_i\|_{c_w}^2+ \|\tlam_i\|_{c_\lambda}^2 
  & \leq \|\tw_{i-1}-\mu_w \big(\grad J_\rho(w_{i-1})-\grad J_\rho(w^\star)\big)\|^2 \nonumber \\
& \quad + \|\tlam_{i-1}\|_{c_\lambda}^2-   \mu_w^2   \|B\tran \tlam_{i-1}\|^2 \label{err_sum}
}
where $c_w\define 1-\mu_w \mu_\lambda \sigma^2_{\max}(B)$. Note that from Lemma \ref{lemma:unique_dual}, $\lambda^\star_b$ lies in the range space of $B$. Moreover, since $\lambda_{-1}=0$, then we know that $\tlam_{i}$ will always lie in the range space of $B$. Thus, from \eqref{lower_B_bound} it holds that $
\|B\tran \tlam_{i-1}\|^2 \geq 
\underline{\sigma}^2(B) \|\tlam_{i-1}\|^2 $. Using this bound  in \eqref{err_sum}, we get:
\eq{
\|\tw_i\|_{c_w}^2+ \|\tlam_i\|_{c_\lambda}^2 
&  \leq  \|\tw_{i-1}-\mu_w \big(\grad J_\rho(w_{i-1})-\grad J_\rho(w^\star)\big)\|^2  \nonumber \\
& \quad +\big(1-\mu_w \mu_\lambda \underline{\sigma}^2(B)\big)\|\tlam_{i-1}\|_{c_\lambda}^2  \label{err_square}
} 
Since $J_\rho(w)$ is  $\delta_\rho$-smooth, it holds that \cite[Theorem 2.1.5]{nesterov2013introductory}:
\eq{\scalemath{0.95}{
\|\grad J_\rho(w_{i-1})-\grad J_\rho(w^\star)\|^2 	 \leq \delta_\rho \tw_{i-1}\tran \big(\grad J_\rho(w_{i-1})-\grad J_\rho(w^\star) \big) 
}\label{cocococococ} 
} 
Thus
\eq{
&\|\tw_{i-1}-\mu_w \big(\grad J_\rho(w_{i-1})-\grad J_\rho(w^\star)\big)\|^2  \nonumber \\
& \ \leq \big(1-\mu_w \nu_{\rho} (2-\mu_w \delta_\rho )\big) \|\tw_{i-1}\|^2 
}
for $\mu < 2 /\delta_\rho$. This follows directly by expanding the square and using the bounds \eqref{stron-convexity} and \eqref{cocococococ}. Let $\gamma_1=1-\mu_w \nu_{\rho} (1-\mu_w \delta_\rho )$. Since $c_w= 1-\mu_w \mu_\lambda \sigma^2_{\max}(B)$, it holds that:
\eq{
&\big(1-\mu_w \nu_{\rho} (2-\mu_w \delta_\rho )\big) \|\tw_{i-1}\|^2 =\gamma_1 \|\tw_{i-1}\|^2  -\mu_w \nu_{\rho} \|\tw_{i-1}\|^2 \nnb
 &=\gamma_1 \|\tw_{i-1}\|_{c_w}^2  -\mu_w( \nu_{\rho}-\mu_\lambda \sigma^2_{\max}(B) \gamma_1) \|\tw_{i-1}\|^2 \nnb
 &\leq \gamma_1 \|\tw_{i-1}\|_{c_w}^2 
}
where the last step we used the fact that the second term is non-positive  under the conditions $\mu_w < {1 \over \delta_\rho}$ and $\mu_{\lambda} \leq \nu_{\rho} / \sigma^2_{\max}(B)$. We conclude that equation \eqref{theorem_exp} holds by using  the previous two equations in \eqref{err_square}. Note that  for positive step-sizes it holds that $c_\lambda= {\mu_w \over \mu_\lambda}>0$. Moreover,  $c_w=1-\mu_w \mu_\lambda \sigma^2_{\max}(B)>0$ and $0<1-\mu_w \mu_\lambda \underline{\sigma}^2(B)<1$ if  $\mu_w \mu_\lambda < {1 \over \sigma^2_{\max}(B)}$.   This condition is satisfied under condition \eqref{step_sizes} because under these conditions we have  
 \eq{
 \mu_w \mu_\lambda < { \nu_{\rho} \over \delta_\rho \sigma^2_{\max}(B)}  \leq {1 \over \sigma^2_{\max}(B)} \nonumber
 }
  where the last inequality hold because $ \nu_{\rho}  \leq \delta_\rho$. 
  \end{proof}
   {\color{black}
 \noindent  Theorem \ref{theorem1} shows that under conditions \eqref{step_sizes}, the incremental algorithm \eqref{alg_pd_da_inc} converges linearly. We will show how to utilize this result to establish the linear convergence of the classical non-incremental (Arrow-Hurwicz) method \cite{arrow1958studies}. 
  \section{Non-incremental PD Gradient Method}  
  Consider  the non-incremental update (Arrow-Hurwicz):
 \begin{subnumcases}{\label{alg_pd_da_non_inc}}
w_i =w_{i-1}-\mu_w \big(\grad J_\eta(w_{i-1})+ B\tran \lambda'_{i-1}\big) \label{primal-descent_non} \\
\lambda'_i = \lambda'_{i-1}+\mu_\lambda (Bw_{i-1}-b) \label{dual-ascent_non}
 \end{subnumcases} 
 where $J_\eta(w) \define
  J(w)+{\eta \over 2} \|Bw-b\|^2$ and $\eta \geq 0$.  Different from \eqref{alg_pd_da_inc}, recursion \eqref{alg_pd_da_non_inc} uses  $w_{i-1}$ in the dual update instead of $w_i$. We will see that  these two different implementations are equivalent for particular choices of $\eta$ and $\rho$.
 \begin{lemma} {\bf (Equivalence of \eqref{alg_pd_da_inc} and \eqref{alg_pd_da_non_inc})}\label{lemma:noninc_equivalnt} { 
The primal iterates of the non-incremental recursion \eqref{alg_pd_da_non_inc}  are equivalent to the primal iterates of the incremental recursion \eqref{alg_pd_da_inc} if $\eta=\rho +\mu_\lambda$ and $\lambda'_{-1}=\lambda_{-1}-\mu_\lambda (Bw_{-1}-b)$.
}
\end{lemma}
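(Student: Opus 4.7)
The plan is to prove equivalence by constructing, from the incremental iterates $\{w_i,\lambda_i\}$, an auxiliary sequence
\[
\lambda'_i \;:=\; \lambda_i - \mu_\lambda(Bw_i - b),\qquad i\ge -1,
\]
and verifying by induction that $\{w_i,\lambda'_i\}$ satisfies the non-incremental recursion \eqref{alg_pd_da_non_inc} with $\eta=\rho+\mu_\lambda$. The base case is handled directly by the hypothesis $\lambda'_{-1}=\lambda_{-1}-\mu_\lambda(Bw_{-1}-b)$, which is exactly the definition above at $i=-1$.

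The key algebraic identity is
\[
\grad J_\eta(w) \;=\; \grad J_\rho(w) + (\eta-\rho)\,B\tran(Bw-b),
\]
so that when $\eta-\rho=\mu_\lambda$ one obtains, for any $i$,
\[
\grad J_\eta(w_{i-1}) + B\tran\lambda'_{i-1}
= \grad J_\rho(w_{i-1}) + \mu_\lambda B\tran(Bw_{i-1}-b) + B\tran\bigl(\lambda_{i-1}-\mu_\lambda(Bw_{i-1}-b)\bigr)
= \grad J_\rho(w_{i-1})+B\tran\lambda_{i-1}.
\]
Substituting this into \eqref{primal-descent_non} shows that the non-incremental primal step coincides with the incremental primal step \eqref{primal-descent}; hence the $w_i$ generated by \eqref{alg_pd_da_non_inc} agrees with the one from \eqref{alg_pd_da_inc}.

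It remains to check that $\lambda'_i$ as defined is consistent with the non-incremental dual update \eqref{dual-ascent_non}. Using the incremental dual step \eqref{dual-ascent}, a direct calculation gives
\[
\lambda'_i \;=\; \lambda_i - \mu_\lambda(Bw_i-b) \;=\; \lambda_{i-1},
\qquad
\lambda'_{i-1}+\mu_\lambda(Bw_{i-1}-b) \;=\; \lambda_{i-1},
\]
so the two expressions agree and the induction carries through. This establishes that the constructed $\{w_i,\lambda'_i\}$ is exactly the sequence produced by \eqref{alg_pd_da_non_inc} from the stated initialization, proving that the primal iterates of the two algorithms coincide.

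There is no real obstacle: the argument is a one-line change of variables followed by an induction. The only point that requires any care is making sure the two descriptions of the same $\lambda'$ — the definitional one ($\lambda'_i=\lambda_i-\mu_\lambda(Bw_i-b)$) and the recursive one (\eqref{dual-ascent_non}) — match at every step, which is what the short calculation above confirms.
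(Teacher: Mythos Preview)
Your proof is correct and is essentially the same change-of-variable argument the paper gives: the paper starts from the non-incremental iterates and defines $\lambda_i:=\lambda'_i+\mu_\lambda(Bw_i-b)$ to recover \eqref{alg_pd_da_inc}, whereas you start from the incremental iterates and define $\lambda'_i:=\lambda_i-\mu_\lambda(Bw_i-b)$ to recover \eqref{alg_pd_da_non_inc}. The key identity $\grad J_\eta(w)=\grad J_\rho(w)+\mu_\lambda B\tran(Bw-b)$ and the verification that the dual updates match are identical in both directions.
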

\begin{proof} 
    Let $\eta=\rho+ \mu_\lambda$.  It holds that $
  J_\eta(w) =J_\rho(w) + {\mu_\lambda \over 2} \|Bw-b\|^2$ so that  $\grad J_\eta(w)=\grad J_\rho(w) + \mu_\lambda B\tran (Bw-b)$. Thus, for $\eta=\rho+ \mu_\lambda$ step \eqref{primal-descent_non} can be rewritten as: 
  \eq{
  w_i
  &=w_{i-1}-\mu_w \big(\grad J_{\rho}(w_{i-1})+ B\tran [\lambda'_{i-1}+\mu_\lambda (Bw_{i-1}-b)]\big) \nnb
  &=w_{i-1}-\mu_w \big(\grad J_{\rho}(w_{i-1})+ B\tran \lambda_{i-1}\big)
  }
  where we introduced the  change of variable $\lambda_i\define \lambda'_{i}+\mu_\lambda (Bw_i-b)$. Adding $\mu_\lambda (Bw_{i}-b)$ to both sides of \eqref{dual-ascent_non} and using $\lambda_i \define \lambda'_{i}+\mu_\lambda (Bw_i-b)$, we can directly rewrite \eqref{dual-ascent_non} as in \eqref{dual-ascent}.
  Thus, the primal iterates of recursion \eqref{alg_pd_da_non_inc} are equivalent to the primal iterates of recursion \eqref{alg_pd_da_inc} if $\lambda'_{-1}=\lambda_{-1}-\mu_\lambda (Bw_{-1}-b)$.  
  \end{proof}

\noindent    Lemma \ref{lemma:noninc_equivalnt} implies that the non-incremental implementation \eqref{alg_pd_da_non_inc} is an instance of the incremental implementation with $\rho=\eta-\mu_\lambda$. Recall that in algorithm \eqref{alg_pd_da_inc} we assume that $\rho \geq 0$. Therefore, if $\eta =\rho + \mu_\lambda \geq \mu_\lambda$,  the linear convergence of \eqref{alg_pd_da_non_inc} follows from Theorem \ref{theorem1} with $\rho =\eta - \mu_\lambda \geq 0$.   The case $0 \leq \eta < \mu_\lambda$ implies that $\rho =\eta - \mu_\lambda<0$. This case can also be analyzed using the exact same technique as in Theorem \ref{theorem1}.  To show that, it suffices to consider the classical case $\eta=0$.
 \begin{corollary}{\bf (Non-Incremental $\eta=0$)} \label{Cor_noninc_convergence}
  If the cost $J(w)$ is $\delta$-smooth and $\nu$-strongly-convex and the step-sizes satisfy:
\eq{
\mu_w < {1 \over \delta -\mu_\lambda
\sigma^2_{\min}(B)}, \quad
 \mu_\lambda \leq {\nu \over  2 \sigma^2_{\max}(B) } \label{step_sizes_noninc}
 }
 Then, recursion \eqref{alg_pd_da_non_inc} with $\eta=0$
  converges linearly  to the optimal saddle-point if  $\lambda'_{-1}=0$.  
 \end{corollary}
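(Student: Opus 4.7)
The plan is to recognize the recursion \eqref{alg_pd_da_non_inc} with $\eta=0$ as an instance of the incremental recursion \eqref{alg_pd_da_inc} driven by the \emph{negative} penalty $\rho=-\mu_\lambda$, and then to re-run the argument of Theorem \ref{theorem1} for this choice. Concretely, I would apply the change of variables $\lambda_i := \lambda'_i + \mu_\lambda(Bw_i-b)$ introduced in the proof of Lemma \ref{lemma:noninc_equivalnt}; the same algebra carried out there (now with $\eta=0$, so that $\rho=\eta-\mu_\lambda=-\mu_\lambda$) shows that $(w_i,\lambda_i)$ satisfies \eqref{alg_pd_da_inc} with $J_\rho(w):=J(w)-\tfrac{\mu_\lambda}{2}\|Bw-b\|^2$. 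Because $\lambda'_{-1}=0$ and $Bw^\star=b$, both $\lambda'_i$ and the transformed $\lambda_i$ stay in $\mathrm{Range}(B)$ for every $i\ge 0$, so the range-space invariance noted before Theorem \ref{theorem1} and the lower bound of Lemma \ref{lemma:lower_bound} remain in force.

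Next I would verify that, although $\rho$ is negative, $J_\rho$ still satisfies the regularity needed inside the proof of Theorem \ref{theorem1}. Since $\nu I\preceq \nabla^2 J\preceq \delta I$, Weyl's inequality applied to $\nabla^2 J_\rho = \nabla^2 J - \mu_\lambda B^\top B$ yields the strong-convexity constant $\nu_\rho = \nu - \mu_\lambda\sigma^2_{\max}(B)$ and the smoothness constant $\delta_\rho = \delta - \mu_\lambda\sigma^2_{\min}(B)$. The hypothesis $\mu_\lambda\le \nu/(2\sigma^2_{\max}(B))$ from \eqref{step_sizes_noninc} immediately delivers $\nu_\rho\ge \nu/2>0$, so both \eqref{stron-convexity} and the co-coercivity inequality \eqref{cocococococ} that are used in Theorem \ref{theorem1} continue to hold for $J_\rho$.

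With these two ingredients in place, the chain of inequalities in the proof of Theorem \ref{theorem1} can be replayed verbatim on the transformed pair $(w_i,\lambda_i)$. Its two step-size conditions translate, via the expressions for $\nu_\rho$ and $\delta_\rho$ just obtained, into $\mu_w < 1/(\delta-\mu_\lambda\sigma^2_{\min}(B))$ and, after the rearrangement $\mu_\lambda\sigma^2_{\max}(B)\le \nu_\rho = \nu - \mu_\lambda\sigma^2_{\max}(B)$, into $\mu_\lambda\le \nu/(2\sigma^2_{\max}(B))$, which are exactly \eqref{step_sizes_noninc}. Theorem \ref{theorem1} then gives a linear decay of the weighted error for $(w_i,\lambda_i)$; the identity $\lambda'_i=\lambda_i-\mu_\lambda(Bw_i-b)$ together with the linear rate on $\|Bw_i-b\|$ transfers the same linear rate to $(w_i,\lambda'_i)\to(w^\star,\lambda^\star_b)$.

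The main obstacle is confirming that Theorem \ref{theorem1} survives when the penalty parameter is negative. The crucial observation is that $\rho$ enters the calculation only through the constants $\nu_\rho$ and $\delta_\rho$ of $J_\rho$; every subsequent manipulation in that proof is purely algebraic and insensitive to the sign of $\rho$. The restriction $\mu_\lambda\le \nu/(2\sigma^2_{\max}(B))$ in \eqref{step_sizes_noninc} is precisely what keeps $\nu_\rho$ uniformly bounded away from zero, and it is the only thing that is needed to prevent the negative-$\rho$ extension from breaking down.
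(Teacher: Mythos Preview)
Your proposal is correct and follows essentially the same route as the paper: both reduce the non-incremental recursion to the incremental one via Lemma~\ref{lemma:noninc_equivalnt} with $\rho=-\mu_\lambda$, verify that the modified cost $J_{-\mu_\lambda}$ remains strongly convex and smooth with constants $\nu-\mu_\lambda\sigma^2_{\max}(B)$ and $\delta-\mu_\lambda\sigma^2_{\min}(B)$, and then read off \eqref{step_sizes_noninc} from the step-size conditions of Theorem~\ref{theorem1}. The only cosmetic difference is that the paper derives these constants via first-order inner-product bounds (so no twice-differentiability is needed) rather than your Weyl-inequality argument on Hessians.
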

\begin{proof} See Appendix \ref{appendix_corro_noninc}.
 \end{proof}

By relating recursion \eqref{alg_pd_da_non_inc} to \eqref{alg_pd_da_inc}, we are able to establish its linear convergence and provide explicit upper bounds on the step-sizes as well. The works \cite{qu2019exponential} and \cite{du2018linear} also established the linear convergence of the non-incremental recursion \eqref{alg_pd_da_non_inc} with $\eta=0$. However, these works do not provide explicit upper bounds on the step-sizes \cite{qu2019exponential} or require particular fixed step-sizes to establish their result \cite{du2018linear}. 
   \begin{remark}[\sc Forward-Backward  Method]{\rm
Assume $b=0$ and consider  the forward-backward gradient algorithm \cite{komodakis2015playing}:
 \begin{subnumcases}{\label{alg_for-back}}
w_i =w_{i-1}-\mu_w \big(\grad J(w_{i-1})+ B\tran \lambda'_{i-1}\big) \label{fb_primal-descent} \\
\lambda'_i = \lambda'_{i-1}+\mu_\lambda B (2w_{i}-w_{i-1}) \label{fb-dual-ascent}
 \end{subnumcases}  
 By using a change of variable trick, the analysis of \eqref{alg_for-back} directly follows from Theorem \ref{theorem1} with $\rho=\mu_\lambda$.   In particular, by adding and subtracting $\mu_w \mu_\lambda B\tran B w_{i-1}$ to the R.H.S. of \eqref{fb_primal-descent}, letting $\lambda_i \define \lambda'_{i}-\mu_\lambda Bw_i$, and rearranging \eqref{fb-dual-ascent},  recursion \eqref{alg_for-back} can be equivalently written as  recursion \eqref{alg_pd_da_inc} ($b=0$) with $\rho=\mu_\lambda$.  } 
\qd
\end{remark}
  }
\section{Application: Distributed Optimization} \label{section:dist}
In this section, we study the benefit of the AL penalty term for distributed consensus optimization problems. 

Consider a network of $K$ agents that are connected through some network and interested in the following problem:
\begin{align}
 \underset{w\in \mathbb{R}^M}{\text{minimize   }}& \quad
 {1 \over K} \sum_{k=1}^K J_k(w) \label{distributed1} 
\end{align}
where $J_k(w):\real^{M} \rightarrow \real$ is a local cost function associated with agent $k$. In order to derive the algorithm that solves \eqref{distributed1} in a distributed manner, we will rewrite \eqref{distributed1} in an equivalent constrained form. We introduce a combination matrix $A=[a_{sk}]$ associated with the network. The entry $a_{sk}$ is the weight used by agent $k$ to scale information arriving from agent $s$ with $a_{sk}=0$ if $s$ is not a direct neighbor of agent $k$, i.e., there is no edge connecting them. 
{\color{black} \begin{assumption} \label{aaump-network}
The network is  static, undirected, and the matrix $A$ is assumed to be primitive, i.e., there exists some integer $p>0$ such that all entries of  $A^p$ are positive. We also assume $A$ to be symmetric, and doubly stochastic. \qd
\end{assumption}
}
			 There exists many rules to chose $A$ such as the Metropolis rule -- see \cite{sayed2014nowbook}, which satisfy Assumption \ref{aaump-network} as long as the network is connected. Under this assumption, it holds that  $I_K-A$ is positive semi-definite and $(I_K-A)x=0$ if, and only, if $x=c \one_K$ for any $c \in \real$ -- see \cite{yuan2019exactdiffI}. Therefore, if we let  $w_k \in \real^M$ denote a local copy of $w$ available at agent $k$ and introduce the network quantities:
\eq{
\sw &\define {\rm col}\{w_1,\cdots,w_K\} \in \real^{KM} \\
\cB &\define  (I_K-A)^{1 \over 2} \otimes I_M \label{combination}, \quad
\cJ(\sw) \define  \sum_{k=1}^K J_k(w_k)
}
 Then, it holds that $\cB \sw=0$ if, and only, if $w_k=w_s \ \forall \ k,s$ -- see \cite{yuan2019exactdiffI}. Thus, problem \eqref{distributed1} is equivalent to the following constrained problem:
\begin{align}
 \underset{\ssw\in \mathbb{R}^{KM}}{\text{minimize   }}& \quad
   \cJ(\sw) , \quad {\rm s.t.} \ \cB \sw=0\label{distributed2} 
\end{align}
 A direct application of \eqref{alg_pd_da_inc} to  problem \eqref{distributed2}  gives:
\begin{subequations} \label{pd_da_non}
\eq{
\sw_i&=\sw_{i-1}-\mu_w \grad_{\ssw} \cJ_\rho(\sw_{i-1})-\mu_w \cB \lambda_{i-1} \label{primal-descent_dist_not} \\
\lambda_i &= \lambda_{i-1}+\mu_\lambda \cB \sw_i \label{dual-ascent_dist_not}
}
 \end{subequations}
 where $\cJ_\rho(\sw) \define \cJ(\sw)+ {\rho \over 2} \| \cB \sw\|^2$ with $\rho \geq 0$. 
  Recursion \eqref{pd_da_non} is not distributed yet because $\cB$ need not have the network structure. However, this can be easily handled by a change of variable. Let $\sy_i=\cB \lambda_i$ and multiply \eqref{dual-ascent_dist_not} by $\cB$ gives:
 \begin{subequations} \label{pd_da_distributed}
\eq{
\sw_i&=\sw_{i-1}-\mu_w \grad_{\ssw} \cJ_\rho(\sw_{i-1})-\mu_w \sy_{i-1} \label{primal-descent_dist} \\
\sy_i &= \sy_{i-1}+\mu_\lambda \cB^2 \sw_i \label{dual-ascent_dist}
}
 \end{subequations}
Since  $\cB^2=(I_K-A) \otimes I_M$ has the network structure, then the $k$-th block of $\cB^2 \sw_i={\rm col}\{u_{k,i}\}_{k=1}^K$  has the distributed form $u_{k,i}=w_{k,i}-\sum_{s \in \cN_k} a_{sk}w_{s,i}$ where $\cN_k$ denotes the neighbors of agent $k$, including agent $k$. Therefore, recursion \eqref{pd_da_distributed} is distributed and agent $k$ can locally update its corresponding $k$-th blocks in $\sw_i$ and $\sy_i$.   
 \subsection{Relation to Other Algorithms} \label{section:relation to extra}
  Before we establish convergence of recursion \eqref{pd_da_distributed} and show the influence of the AL penalty term on its performance, we show how the derivation of  recursions \eqref{pd_da_non} and \eqref{pd_da_distributed} are related to some state of the art algorithms. 
 \subsubsection{EXTRA \cite{shi2015extra}}
Note that the saddle point interpretation of EXTRA  appeared in the work \cite{mokhtari2016dsa}. If we choose $\mu_w=\mu$, $\mu_\lambda={1 \over 2\mu}$, and $\rho={1 \over 2 \mu}$ in algorithm   \eqref{pd_da_non} we get:
  \begin{subequations} \label{extra_non}
\eq{
\sw_i&=\bar{\cA} \sw_{i-1}-\mu \grad_{\ssw} \cJ(\sw_{i-1}) - \mu \cB \lambda_{i-1} \label{extra_dist_not1} \\
\lambda_i &= \lambda_{i-1}+ {1 \over 2 \mu }\cB \sw_i \label{extra_dist_not2}
}
 \end{subequations}
 where $\bar{\cA}\define I-{1 \over 2}\cB^2 ={1 \over 2}(I+\cA)$ and $\cA=A \otimes I_M$.  By eliminating the dual-variable (see, e.g.,  \cite{jakovetic2019unification}), the above algorithm can be shown to be equivalent to the EXTRA algorithm in \cite{shi2015extra}, which requires communicating the primal variable once per iteration. 
 \subsubsection{Exact diffusion \cite{yuan2019exactdiffI}}
Consider the following update:
 \begin{subequations} \label{exdiff}
\eq{
\sw_i&=\bar{\cA} \bigg(\sw_{i-1}-\mu \grad_{\ssw} \cJ(\sw_{i-1})\bigg) - \mu \cB \lambda_{i-1} \label{exdiff1} \\
\lambda_i &= \lambda_{i-1}+ {1 \over 2 \mu } \cB \sw_i \label{exdiff2}
}
 \end{subequations}
which differs from EXTRA \eqref{extra_non} in the primal update where the gradient is also multiplied by $\bar{\cA}$. By eliminating the dual-variable, the above algorithm can be shown to be equivalent to the exact-diffusion algorithm from  \cite{yuan2019exactdiffI}. Different from a traditional gradient primal-descent \eqref{extra_dist_not1} that was used to derive EXTRA, exact diffusion uses incremental gradient descent steps -- see \cite{yuan2019exactdiffI} for details.   Exact diffusion  enjoys wider step-size  $\mu$ stability range and better convergence performance compared to EXTRA -- see \cite{yuan2019exactdiffII}.

 It is worth mentioning that if we consider the penalized unconstrained problem $\min_{\ssw} \ \cJ_\rho(\sw) = \cJ(\sw)+ {\rho \over 2} \| \cB \sw\|^2$ and apply two incremental gradient descent steps for the two terms in the penalized cost  with step-size $\mu$ and $\rho={1 \over \mu}$, we arrive at:
\begin{subequations} \label{diff1}
\eq{
\ssz_i&=\sw_{i-1}-\mu \grad_{\ssw} \cJ(\sw_{i-1}) \\
\sw_i&=   \cA \ssz_i 
}
\end{subequations}
which is the diffusion algorithm \cite{sayed2014nowbook,yuan2019exactdiffI}. The bias that arises from solving the penalized problem, rather than the original problem, can be corrected by employing exact diffusion \cite{yuan2019exactdiffI}.
 \subsubsection{DIGing \cite{nedic2017achieving}}
  If we choose a different penalty function $\cJ_\rho(\sw) = \cJ(\sw)+ {\rho \over 2} \|  \sw\|_{I-\cA^2}^2$ and set $\cB^2 \leftarrow \cB$  in algorithm   \eqref{pd_da_non} with $\mu_w=\mu$, $\mu_\lambda={1 \over \mu}$, and $\rho={1 \over  \mu}$ we get:
   \begin{subequations} \label{diging}
\eq{
\sw_i&=\cA^2 \sw_{i-1}-\mu \grad_{\ssw} \cJ(\sw_{i-1}) - \mu \cB^2 \lambda_{i-1} \label{diging1} \\
\lambda_i &= \lambda_{i-1}+{1 \over  \mu} \cB^2 \sw_i \label{diging2}
}
 \end{subequations}
 By eliminating the dual variable, this algorithm can be shown to be equivalent to DIGing -- see \cite[Section 2.2]{nedic2017achieving}.    We see that  the main difference from the EXTRA derivation is in the choice of the constraint and penalty matrices.  
 
{\color{black}
\subsubsection{Linearized ADMM \cite{ling2015dlm}}
 Consider an instance\footnote{We let $\tilde{d}_k=2cd_k+\rho=d$ in the DLM from \cite{ling2015dlm}. } of the decentralized linearized ADMM (DLM) method from \cite{ling2015dlm}:
 \begin{subequations} \label{lin_admm}
\eq{
\sw_i&=\sw_{i-1}-{1 \over d} \big( \grad_{\ssw} \cJ(\sw_{i-1})+c \cL \sw_i+ \sy_{i-1} \big) \label{admm1} \\
\sy_i &= \sy_{i-1}+c \cL \sw_i \label{admm2}
}
 \end{subequations}
 where $d,c>0$. The matrix $\cL$ is the oriented Laplacian matrix chosen such that the $k$-th block of $\cL \sw_i$ is equal to $\sum_{s \in \cN_k} w_{k,i}-w_{s,i}$. Recursion \eqref{lin_admm} is equivalent to \eqref{pd_da_distributed} with $\cB^2$ replaced by $\cL$, $\mu_\lambda=\rho=c$, and $\mu_w=1/d$.
 \begin{remark}[\sc Generalized Framework]{\rm
Based on the previous derivations, one can rewrite problem \eqref{distributed2} more generally as
 \begin{align}
 \underset{\ssw\in \mathbb{R}^{KM}}{\text{minimize   }}& \quad
   \cJ(\sw)+ {1 \over 2  } \| \sw\|_{\bar{\cC}}^2 , \quad {\rm s.t.} \ \cC \sw=0 \label{extra_problem} 
\end{align}
where   $\cC$ and $\bar{\cC}$ are general consensus matrices satisfying $\cC \sw=0$ if, and only, if $\bar{\cC} \sw=0$ if, and only, if $w_1=\cdots=w_K$. Various algorithms can be derived by proper choices of $\cC$ and $\bar{\cC}$ and using more general primal-dual algorithms.  For works focusing on unifying distributed algorithms, we refer interested readers to  \cite{jakovetic2019unification,alghunaim2019decentralized}.
 } 
\qd
\end{remark}
\begin{remark}[\sc Augmented Lagrangian Term]{\rm
We notice that most state-of-the-art algorithms are based on {\em augmented Lagrangian} formulations (i.e., they require $\rho$ to be strictly positive). However,  it is unclear whether the AL term is always beneficial. Unlike previous works, we reveal the influence of AL penalty term on convergence rate of distributed algorithms compared to the classical Lagrangian case ($\rho=0$).  } 
\qd
\end{remark}
}
\subsection{AL Penalty Term Influence}
To reveal the influence of the AL penalty parameter on the performance of distributed algorithms, we study the linear convergence properties of \eqref{primal-descent_dist}--\eqref{dual-ascent_dist}. To do that, we  let $(\sw^\star,\lambda^\star_b)$ be the point satisfying the optimality conditions of problem \eqref{distributed2} where  $\lambda^\star_b$ lies in the range space of $\cB$.  {\color{black} First, we recall the following result from \cite[Proposition 3.6]{shi2015extra}. 
 \begin{lemma}[\sc AL Penalized Cost] \label{lemma_penalized cost}
{  
		Let $\rho>0$. If each cost $J_k(w)$ is convex and $\delta$-smooth, and the aggregate cost ${1 \over K} \sum_{k=1}^K J_k(w)$  is  $\bar{\beta}$-strongly convex, then the penalized augmented cost $ \cJ(\sw)+ {\rho \over 2} \|  \sw\|_{\cB^2}^2$  is $\nu_{\rho}$-strongly-convex with respect to $\sw^\star$ where
  \eq{
 \hspace{-2mm} \nu_{\rho} \hspace{-0.5mm}\define  \hspace{-.5mm}\min\left\{\bar{\beta}-2\delta \eta,{\rho\underline{\sigma}^2(\cB) \eta^2 \over 4(\eta^2+1)}\right\}>0, \ \text{for $\eta \hspace{-0.5mm} \in \hspace{-0.5mm} \left(\hspace{-0.5mm} 0,{\bar{\beta} \over 2 \delta}\right)$} \label{nurho_definition}
  }
  and $\nu_\rho \rightarrow \bar{\beta}$ as $\rho \rightarrow \infty$. 
	} \qd
\end{lemma}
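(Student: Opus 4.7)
My plan is to establish the strong-convexity inequality
\[
(\sw-\sw^\star)^\top \big(\grad \cJ_\rho(\sw)-\grad \cJ_\rho(\sw^\star)\big) \;\geq\; \nu_\rho \|\sw-\sw^\star\|^2
\]
by an orthogonal splitting of the error into a consensus component and a disagreement component, lower-bounding the primal-gradient and the penalty-gradient contributions separately, and then balancing the resulting cross-term via a Young's inequality with a free parameter $\eta$. The first observation is that $\sw^\star=\mathbf{1}_K\otimes w^\star$ lies in $\mathrm{Null}(\cB^2)$, so $\grad \cJ_\rho(\sw^\star)=\grad\cJ(\sw^\star)$ and the penalty gradient contributes $\rho(\sw-\sw^\star)^\top\cB^2(\sw-\sw^\star)=\rho\|\sw-\sw^\star\|_{\cB^2}^2$.

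Writing $\bar w=\tfrac{1}{K}\sum_k w_k$ and decomposing $\sw-\sw^\star=\sw_\|+\sw_\perp$ with $\sw_\|=\mathbf{1}_K\otimes(\bar w-w^\star)\in\mathrm{Null}(\cB)$ and $\sw_\perp\in\mathrm{Range}(\cB)$, we have $\|\sw-\sw^\star\|^2=\|\sw_\|\|^2+\|\sw_\perp\|^2$ and, by Lemma \ref{lemma:lower_bound} applied to $\cB^2$,
\[
\rho\|\sw-\sw^\star\|_{\cB^2}^2 \;=\; \rho\|\sw_\perp\|_{\cB^2}^2 \;\geq\; \rho\,\underline\sigma^2(\cB)\,\|\sw_\perp\|^2.
\]
Next I would lower bound the primal inner product $T_1:=\sum_{k=1}^K (w_k-w^\star)^\top\big(\grad J_k(w_k)-\grad J_k(w^\star)\big)$ via the auxiliary point $\bar w$, writing $w_k-w^\star=(w_k-\bar w)+(\bar w-w^\star)$ and $\grad J_k(w_k)-\grad J_k(w^\star)=[\grad J_k(w_k)-\grad J_k(\bar w)]+[\grad J_k(\bar w)-\grad J_k(w^\star)]$, then expanding into four sums. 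The diagonal sum $\sum_k(\bar w-w^\star)^\top[\grad J_k(\bar w)-\grad J_k(w^\star)]$ contributes $\bar\beta\|\sw_\|\|^2$ by $\bar\beta$-strong convexity of the aggregate cost $\tfrac{1}{K}\sum_k J_k$; the sum $\sum_k(w_k-\bar w)^\top[\grad J_k(w_k)-\grad J_k(\bar w)]$ is nonnegative by convexity of each $J_k$; and the two off-diagonal terms are each bounded in absolute value by $\delta\|\sw_\|\|\|\sw_\perp\|$ via Cauchy--Schwarz, $\delta$-smoothness, and the identity $\|\sw_\|\|=\sqrt K\|\bar w-w^\star\|$. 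This yields $T_1\geq\bar\beta\|\sw_\|\|^2-2\delta\|\sw_\|\|\|\sw_\perp\|$.

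Finally, I would combine the two lower bounds and apply a Young's inequality with parameter $\eta$ to split the cross-term $2\delta\|\sw_\|\|\|\sw_\perp\|$ into a multiple of $\|\sw_\|\|^2$ (absorbed into the $\bar\beta$-term, producing the coefficient $\bar\beta-2\delta\eta$) and a multiple of $\|\sw_\perp\|^2$ (offset by the penalty contribution $\rho\underline\sigma^2(\cB)\|\sw_\perp\|^2$). The condition $\eta\in(0,\bar\beta/(2\delta))$ guarantees positivity of the $\|\sw_\|\|^2$-coefficient, while positivity of the $\|\sw_\perp\|^2$-coefficient is ensured by $\rho>0$; taking the minimum of the two coefficients yields a valid $\nu_\rho>0$, and the claim $\nu_\rho\to\bar\beta$ as $\rho\to\infty$ follows by letting $\eta=\eta(\rho)\to 0$ at a rate (e.g.\ $\eta=\rho^{-1/3}$) that drives both coefficients above any level below $\bar\beta$. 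The main obstacle is the delicate bookkeeping in the Young's step needed to match the precise constant $\rho\underline\sigma^2(\cB)\eta^2/(4(\eta^2+1))$ stated in \eqref{nurho_definition}; this specific form emerges by optimizing the quadratic form in $(\|\sw_\|\|,\|\sw_\perp\|)$ over the parametrization $\tan\theta=\eta$ and choosing the Young's coefficient so that the cross-term is absorbed symmetrically, rather than by the naive splitting that I sketched above.
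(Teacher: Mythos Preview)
The paper does not supply a proof of this lemma: it is quoted verbatim as \cite[Proposition~3.6]{shi2015extra} with no argument given, so there is nothing in the paper to compare your sketch against.

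On the merits, your outline is sound and is essentially the approach used in the cited reference. The orthogonal split $\sw-\sw^\star=\sw_\|+\sw_\perp$, the lower bound $\rho\|\sw_\perp\|_{\cB^2}^2\ge\rho\,\underline\sigma^2(\cB)\|\sw_\perp\|^2$ (this is Lemma~\ref{lemma:lower_bound} with $B=\cB$, not $\cB^2$; since $\cB$ is symmetric, $\sw_\perp\in\mathrm{Range}(\cB)$ plays the role of $\lambda_x$), and the four-term expansion of $T_1$ around $\bar w$ are all correct. In particular your bound $T_1\ge\bar\beta\|\sw_\|\|^2-2\delta\|\sw_\|\|\|\sw_\perp\|$ is exactly what one gets.

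The only loose end you already flag: the naive Young split $2\delta ab\le 2\delta\eta\,a^2+\tfrac{\delta}{2\eta}b^2$ gives $\nu_\rho=\min\{\bar\beta-2\delta\eta,\ \rho\underline\sigma^2(\cB)-\tfrac{\delta}{2\eta}\}$, which is a perfectly valid strong-convexity constant but not the one in \eqref{nurho_definition}. The displayed constant $\rho\underline\sigma^2(\cB)\eta^2/(4(\eta^2+1))$ arises in \cite{shi2015extra} from a slightly different allocation---retaining a fraction of the penalty term rather than spending all of it on the cross-term---so matching it is purely bookkeeping, not a conceptual gap. Your argument that $\nu_\rho\to\bar\beta$ by sending $\eta=\eta(\rho)\to0$ with $\rho\eta^2\to\infty$ is correct for either form of the constant.
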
 
 Note that even if the aggregate cost  ${1 \over K} \sum_{k=1}^K J_k(w)$   is strongly-convex, each cost $J_k(w_k)$ is not necessarily strongly-convex, e.g., $J_k(w)=\big(w(k)\big)^2$ where $w(k)$ is the $k$-th entry of $w \in \real^{M}$, is not strongly-convex with respect to $w \in \real^K$ but ${1 \over K} \sum_{k=1}^K J_k(w)={1 \over K} \|w\|^2$ is strongly-convex. The previous Lemma allows us to reveal the effect of the AL term through the following result. 
\begin{corollary}{  \label{theorem2}
  Assume that each cost $J_k(w)$ is convex and $\delta$-smooth and let Assumption \eqref{aaump-network} hold. Then, the following  result holds:
  \begin{itemize}
  \item If $\rho>0$, the aggregate cost ${1 \over K} \sum_{k=1}^K J_k(w)$  is  $\bar{\beta}$-strongly convex, and $\mu_w  < {1  \over \delta_\rho}, \quad
 \mu_\lambda \leq {\nu_\rho \over \sigma^2_{\max}(\cB) }$, then recursion \eqref{primal-descent_dist}--\eqref{dual-ascent_dist} with $\sy_{-1}=0$ converges linearly and the convergence rate is upper bounded by: 
 \eq{
\hspace{-2mm} \gamma_{AL} = \max \left\{1-\mu_w \nu_{\rho} (1-\mu_w \delta_\rho ),1-\mu_w \mu_\lambda \underline{\sigma}^2(\cB) \right\} \label{gama_L}
 }
 where $\delta_\rho=\delta+\rho \sigma^2_{\max}(\cB)$ and $\nu_\rho >0$ is defined in \eqref{nurho_definition}.
 \item If $\rho=0$, each cost $J_k(w)$ is $\beta_k$-strongly-convex, and $\mu_w  < {1  \over \delta}, \quad
 \mu_\lambda \leq {\nu_0 \over \sigma^2_{\max}(\cB) }$, then recursion \eqref{primal-descent_dist}--\eqref{dual-ascent_dist} with $\sy_{-1}=0$ converges linearly and the convergence rate is upper bounded by: 
 \eq{
\hspace{-2mm} \gamma_L = \max \left\{1-\mu_w \nu_{0} (1-\mu_w \delta ),1-\mu_w \mu_\lambda \underline{\sigma}^2(\cB) \right\} \label{gama_L}
 }
 where $\nu_0 = \min_k \beta_k$. 
  \end{itemize}
 } 
\end{corollary}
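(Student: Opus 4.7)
The plan is to reduce the corollary to a direct application of Theorem~\ref{theorem1} applied to the lifted constrained problem \eqref{distributed2}, treating the network matrix $\cB$ in place of $B$ and the aggregate cost $\cJ(\sw)$ in place of $J(w)$. First I would recover recursion \eqref{pd_da_non} from \eqref{pd_da_distributed} by undoing the change of variable $\sy_i = \cB \lambda_i$: since $\sy_{-1}=0$, the initialization $\lambda_{-1}=0$ produces identical primal iterates $\sw_i$, and by the observation preceding Theorem~\ref{theorem1} every $\lambda_i$ stays in ${\rm Range}(\cB)$. Consequently $\sy_i = \cB \lambda_i$ at every iteration, so any linear decay of $\|\lambda_i-\lambda_b^\star\|$ transfers to $\|\sy_i - \cB\lambda_b^\star\|$ via $\sigma_{\max}(\cB)$.

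Next I would verify Assumption~\ref{assump:cost} for the lifted problem. Smoothness of $\cJ$ with constant $\delta$ follows immediately from the separable structure and the per-agent smoothness, and hence $\cJ_\rho(\sw)=\cJ(\sw)+(\rho/2)\|\cB\sw\|^2$ is $\delta_\rho=\delta+\rho\sigma_{\max}^2(\cB)$-smooth. The two cases differ only in how condition \eqref{stron-convexity} is obtained:
\begin{itemize}
\item \textbf{Case $\rho>0$.} Lemma~\ref{lemma_penalized cost} supplies exactly the required strong-convexity-with-respect-to-$\sw^\star$ for $\cJ_\rho$ with constant $\nu_\rho$ given in \eqref{nurho_definition}. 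This is the pivotal ingredient that lets us get around the fact that individual $J_k$ need not be strongly convex.
\item \textbf{Case $\rho=0$.} Since every $J_k$ is $\beta_k$-strongly convex, $\cJ(\sw)=\sum_k J_k(w_k)$ is $(\min_k \beta_k)$-strongly convex in the usual sense, which in particular implies \eqref{stron-convexity} with $\nu_0=\min_k \beta_k$.
\end{itemize}

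Having verified Assumption~\ref{assump:cost} in each case, I would invoke Theorem~\ref{theorem1} with $(B,J,\delta_\rho,\nu_\rho)$ replaced by the network-side quantities, and under the step-size conditions stated in each bullet of the corollary, which are precisely \eqref{step_sizes} specialized to $\cB$. Theorem~\ref{theorem1} then yields linear convergence of $(\sw_i,\lambda_i)$ to $(\sw^\star,\lambda_b^\star)$ with rate exactly $\gamma_{AL}$ or $\gamma_L$ respectively, the expressions in \eqref{gama_L} being direct substitutions into the definition of $\gamma$ from Theorem~\ref{theorem1}. Linear convergence of $\sy_i$ to $\cB\lambda_b^\star$ then follows from $\|\sy_i-\cB\lambda_b^\star\|\le \sigma_{\max}(\cB)\|\lambda_i-\lambda_b^\star\|$.

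I do not expect a genuine obstacle: the work was done in Theorem~\ref{theorem1} and in Lemma~\ref{lemma_penalized cost}, and the corollary is really an unpacking of the notation. The only point worth care is ensuring that the range-space initialization argument carries over, which it does because choosing $\sy_{-1}=0$ implicitly corresponds to $\lambda_{-1}=0$; this is what licenses use of the sharp bound \eqref{lower_B_bound} with $\underline\sigma(\cB)$ rather than requiring $\cB$ to be full row rank (which it is not, since $\cB\one=0$ in the consensus formulation).
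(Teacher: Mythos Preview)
Your proposal is correct and follows essentially the same route as the paper: both argue that with $\sy_{-1}=0$ one may take $\lambda_{-1}=0$ so that $\sy_i=\cB\lambda_i$ for all $i$, then verify Assumption~\ref{assump:cost} for the lifted problem (via Lemma~\ref{lemma_penalized cost} when $\rho>0$ and via separability plus per-agent strong convexity when $\rho=0$), and finally invoke Theorem~\ref{theorem1} with $B\leftarrow\cB$. The paper adds the observation that $\sy_i=0\Leftrightarrow\lambda_i=0$ via Lemma~\ref{lemma:lower_bound}, but this is not needed for the argument you give.
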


\begin{proof}  See Appendix \ref{appendix_corro_distributed} 
\end{proof}
  From the previous result, we see that for $\rho>0$, we only require the aggregate cost ${1 \over K} \sum_{k=1}^K J_k(w)$ to be strongly-convex to establish linear convergence since from Lemma \ref{lemma_penalized cost}, we know that for a strongly-convex aggregate cost ${1 \over K} \sum_{k=1}^K J_k(w)$, the penalized augmented cost $\cJ_\rho(\sw)$ is guaranteed to be strongly-convex w.r.t. $\sw^\star$. However, for the linear convergence of the case $\rho=0$, we require the stronger condition that each individual cost is strongly-convex. This is because the cost  $\cJ(\sw) \in \real^{MK} \rightarrow \real$ is  strongly-convex if, and only, if  each individual cost is strongly-convex -- see the argument in the proof of Corollary \ref{theorem2}.  Thus, the AL term is beneficial if the aggregate cost is strongly-convex but the individual costs are not -- see simulation section. However,  if each individual cost $J_k(w)$ is $\beta_k$-strongly-convex, then the presence of the AL term ($\rho>0$) can either degrade the performance compared to $\rho=0$ or improve the performance as we now explain.

  From the step-size conditions in Corollary \ref{theorem2}, the convergence rates $\gamma_L$ and $\gamma_{AL}$ have the form
 \eq{
 \gamma_L=1-c / \kappa_{L}, \quad \gamma_{AL}=1-c / \kappa_{AL}
 }
     for some $0<c<1$ where $\kappa_{L} \define \delta / \nu_0$ and $\kappa_{AL} \define \delta_\rho / \nu_\rho$ are the condition numbers of $\cJ(\sw)$ and $\cJ_\rho(\sw)$. 
       Note that $\nu_{\rho} \approx \bar{\beta}$ (for large enough $\rho$). If the condition number of the aggregate cost is much smaller than the condition number of the individual costs (e.g., $\bar{\beta} >>  \min_k \beta_k $), then the  AL method will  have faster convergence rate since $\kappa_{AL} < \kappa_{L}$, consequently $\gamma_{AL} < \gamma_{L} <1$. However, when the individual costs are well conditioned (e.g,  $\beta_k \approx \bar{\beta}$), then $\kappa_{AL} \approx \kappa_{L}$ and the AL penalty term is not that beneficial. Moreover, for large $\rho$ we can have $\kappa_{AL} > \kappa_{L}$; hence $\gamma_{L} < \gamma_{AL}$ and AL term  slows down the convergence rate.    
 }
\section{Simulation} 
 To illustrate the influence of the AL term on the performance of distributed algorithms, we consider the distributed optimization problem \eqref{distributed1} with quadratic costs $J_k(w)=w\tran R_k w + r_k\tran w$ where $w \in \real^{20}$, $R_k \in \real^{20 \times 20}$, and $r_k \in \real^{20}$.  We randomly generated a network of $K=20$ agents shown in the right side of Fig. \ref{fig:simresults1}. The matrix $A$ is generated using the Metropolis rule \cite{sayed2014nowbook}. Each vector $r_k$ is randomly generated with its entries uniformly selected between $[0,2]$. Note that the condition number of the cost $J_k(w)=w\tran R_k w + r_k\tran w$ is the ratio of the largest and smallest eigenvalues of $R_k$.  In our simulations, we construct the matrix $R_k$ under three different scenarios: 
  {\color{black}
\subsubsection{Well conditioned costs $J_k(w)$}
   The matrix $R_k$ is a randomly generated diagonal matrix with integer diagonal entries, each chosen between $[6,8]$. In this case, each $J_k(w)$ is well conditioned  because $8/6$ is not very large. The result for this scenario is shown on the left plot of Fig. \ref{fig:simresults1}. In all results, PD distributed refers to \eqref{pd_da_distributed} (with $\rho=0$) and AL PD distributed refers to \eqref{pd_da_distributed} with $\rho>0$,  EXTRA algorithm from \cite{shi2015extra}, and exact diffusion from \cite{yuan2019exactdiffI}.  The  step-sizes are manually chosen to get the best possible convergence rate for each algorithm. We notice that for this case, increasing $\rho$ decreases the performance compared to the case $\rho=0$.    In this scenario, we do not see any advantages of AL methods compared to the Lagrangian method ($\rho=0$) due to the reasons mentioned in the previous section. Note that EXTRA \eqref{extra_non} and exact diffusion \eqref{exdiff} converges slower since they require $\rho = 1/2\mu$, which cannot be tweaked independently from the step-size $\mu$.
\subsubsection{Ill conditioned costs $J_k(w)$}
 We now construct $R_k$ so that the local costs become  ill-conditioned. To do that,  we let $R_k$ to be a diagonal matrix where the $(k,k)$-th diagonal entry for each agent ($R_k(k,k)$) are chosen randomly between $[2,8]$ and the other diagonal entries are chosen uniformly between $(0,1)$. In this case, the ratio of the largest diagonal entry and the smallest can be very large making each $J_k(w)$  ill-conditioned. However, the aggregate cost ${1 \over K}\sum_{k=1}^K (w\tran R_k w + r_k\tran w)$ is better conditioned compared to the individual costs. This is because from our construction, the condition number  of $R= \sum_{k=1}^K R_k$ is smaller than the condition number of $R_k$.  The left plot  of Fig. \ref{fig:simresults2} shows the result for this case. The  step-sizes are manually chosen to get the best possible convergence rate for each algorithm. In this case, we see that the Lagrangian method performs poorly compared to AL PD method, EXTRA, and Exact diffusion.
 \subsubsection{Non-convex costs $J_k(w)$}
  We now consider the case where the individual costs $J_k(w)$ are non-convex but the aggregate cost $\sum_{k=1}^K J_k(w)$ is strongly-convex. To do that, we let $R_k$ be a diagonal matrix with the $(k,k)$-th diagonal entry for each agent, $R_k(k,k)$,  chosen randomly between $[2,8]$, the entries $R_k(k-1,k-1)=-{R_{k-1}(k-1,k-1)/2}$ for all $k \geq 2$. In this case, the individual costs $\{J_k(w)\}_{k \geq 2}$ are non-convex since they have negative diagonal entries.  However,  the aggregate cost ${1 \over K}\sum_{k=1}^K (w\tran R_k w + r_k\tran w)$ is strongly convex since $R=\sum_{k=1}^K R_k$ is positive-definite from construction.  The result of this set-up is shown in the right plot  of Fig. \ref{fig:simresults2}. The  step-sizes are manually chosen to get the best possible convergence rate for each algorithm. We see that the AL based methods still converge linearly. However, the PD distributed method diverges even under small step-sizes. This is because the cost $\cJ(\sw)=\sum_{k=1}^K (w_k\tran R_k w_k + r_k\tran w_k)$ is non-convex since the Hessian $\grad^2\cJ(\sw)={\rm blkdiag}\{R_k\}_{k=1}^K$ is indefinite. In contrast, the cost $\cJ_\rho(\sw)$ is strongly-convex for large $\rho$.
\begin{figure}[h]
	\includegraphics[width=\linewidth]{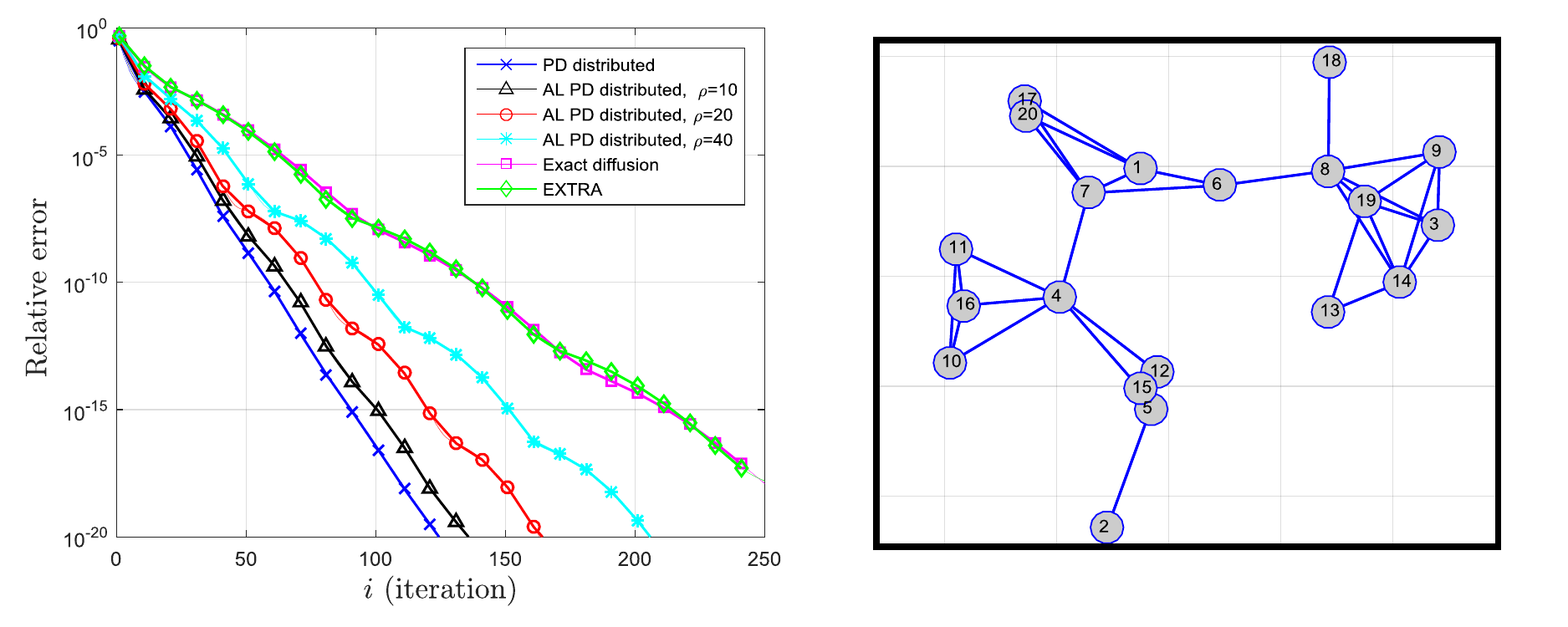}
	\vspace{-5mm} 
	\caption{ \small The left plot shows the simulation result for well conditioned local costs. The right plot shows the network topology used in the simulations. Relative error is $\|\sw_i-\sw^\star\|^2/\|\sw^\star\|^2$. }
\label{fig:simresults1}
\end{figure} 
\vspace{-3mm}
 \begin{figure}[h]
	\includegraphics[width=\linewidth]{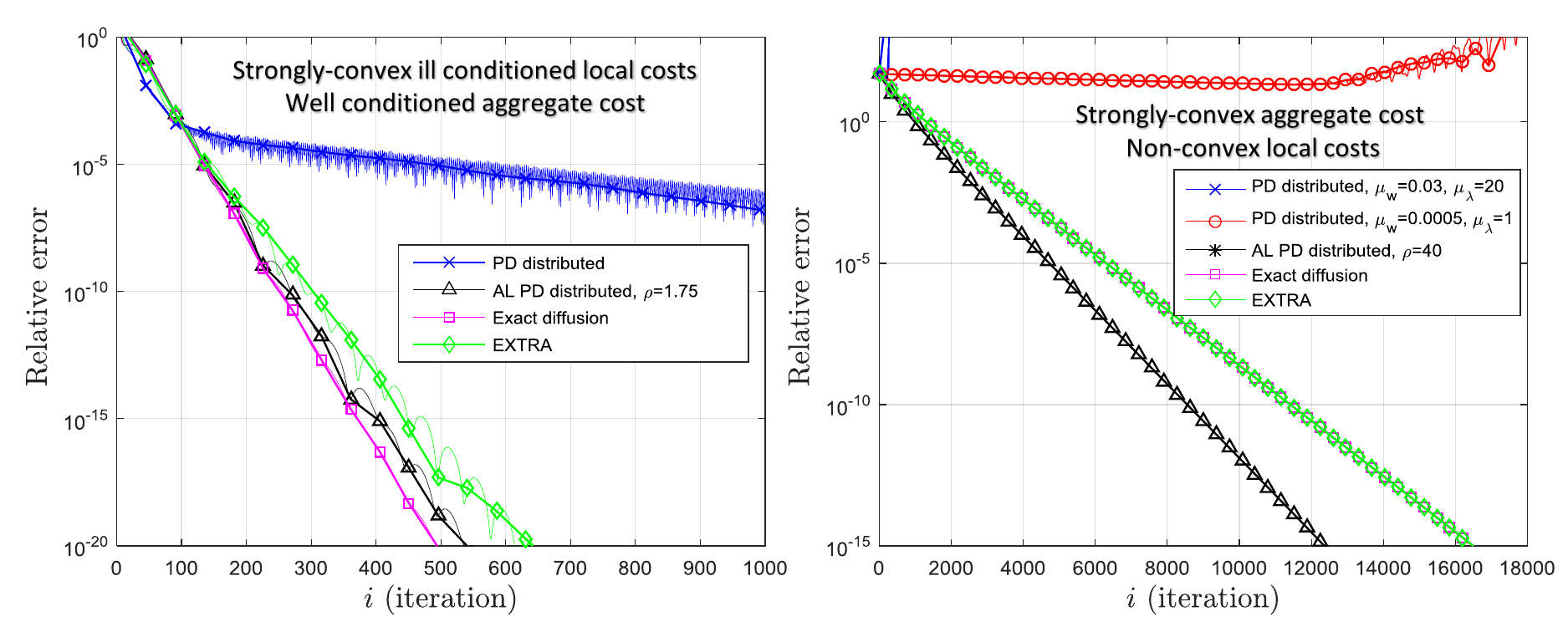}
	\vspace{-5mm}
	\caption{\small  Simulation result for ill-conditioned and non-convex local costs cases. }
\label{fig:simresults2}
\end{figure}
\section{Concluding Remarks} 
In this work, we studied the linear convergence of the classical incremental primal-dual gradient algorithm \eqref{alg_pd_da_inc}. We provided an original proof that is applicable to both the Lagrangian and augmented Lagrangian implementations. Moreover, we proved the linear convergence of the non-incremental implementation \eqref{alg_pd_da_non_inc} by relating it to the incremental one. Finally, we studied algorithm \eqref{alg_pd_da_inc} in distributed multi-agent optimization problems. The effect of the AL term on the performance of distributed algorithms is illustrated in theory and validated by means of simulation.}
\bibliographystyle{ieeetran}
\bibliography{myref_review,bibtex} 
\appendices
\section{Proof of Corollary \ref{Cor_noninc_convergence} } \label{appendix_corro_noninc}
 For $\eta=0$, we know from Lemma \ref{lemma:noninc_equivalnt} that recursion  \eqref{alg_pd_da_non_inc}  is equivalent to the incremental implementation with $\rho=\eta=-\mu_\lambda$, namely,
\begin{subequations}
\eq{
w_i&=w_{i-1}-\mu_w \big(\grad J'(w_{i-1})+ B\tran \lambda_{i-1}\big)  \\
\lambda_i &= \lambda_{i-1}+\mu_\lambda (Bw_i-b) 
}
 \end{subequations}
 where  $
J'(w)=J_{-\mu_\lambda}(w)=J(w)-{\mu_\lambda \over 2} \|Bw-b\|^2$. 
The above recursion is exactly \eqref{alg_pd_da_inc} with $\rho=0$ and cost $J'(w)$ instead of $J(w)$.  Therefore, its analysis follows from Theorem \ref{theorem1} as long as  $J'(w)$ is $\delta'-$smooth and $\nu'$-strongly-convex for some $\delta' \geq \nu' >0$.  It holds that:
\eq{
&\big(\grad J'(w_1)-\grad J'(w_2)\big)\tran (w_1-w_2) \nonumber \\
&=\big(\grad J(w_1)-\grad J(w_2)\big)\tran (w_1-w_2) - \mu_\lambda \|B(w_1-w_2)\|^2
\nonumber \\
 & \leq \|\grad J(w_1)-\grad J(w_2)\| \|w_1-w_2\| - \mu_\lambda \sigma^2_{\min} (B) \|w_1-w_2\|^2 \nonumber \\
 & \leq \big(\delta - \mu_\lambda \sigma^2_{\min} (B)\big) \|w_1-w_2\|^2, \quad \forall \ w_1,w_2 \in \real^M
}
where the first inequality holds from Cauchy-Schwartz  and  $\|B(w_1-w_2)\|^2 \geq \sigma^2_{\min} (B) \|w_1-w_2\|^2$. The last inequality holds since $J(w)$ is $\delta$-smooth.
The above inequality is equivalent to  the cost $J_{-\mu_\lambda}(w)$ being $\delta'=\delta - \mu_\lambda
\sigma^2_{\min}(B)$ smooth -- see \cite[Theorem 2.1.5]{nesterov2013introductory}. Moreover, from strong-convexity condition \eqref{stron-convexity}, it also holds that 
\eq{
&\big(\grad J'(w_1)-\grad J'(w_2)\big)\tran (w_1-w_2) \nonumber \\
&=\big(\grad J(w_1)-\grad J(w_2)\big)\tran (w_1-w_2) - \mu_\lambda \|B(w_1-w_2)\|^2
\nonumber \\
 & \geq  \nu \|w_1-w_2\|^2- \mu_\lambda \|B(w_1-w_2)\|^2 \nonumber \\
 & \geq  \big(\nu-\mu_\lambda
\sigma^2_{\max}(B)\big) \ \|w_1-w_2\|^2 , \quad \forall \ w_1,w_2 \in \real^M }
Hence, the cost $J'(w)=J_{-\mu_\lambda}(w)$ is $\nu'=\nu-\mu_\lambda
\sigma^2_{\max}(B)>0$ strongly-convex if $\mu_\lambda < \nu / \sigma^2_{\max}(B)$. By replacing $\delta$ and $\nu$ with $\delta'$ and $\nu'$ in \eqref{step_sizes} and setting $\rho=0$ we get conditions \eqref{step_sizes_noninc}.
\section{Proof of Corollary \ref{theorem2} } \label{appendix_corro_distributed}
Note that if $\lambda_{-1}=0$ and $\sy_{-1}=0$, then from \eqref{dual-ascent_dist_not} and \eqref{dual-ascent_dist} it holds that $\sy_i=\cB \lambda_i$ for all $i \geq -1$. Since $\lambda_i$ lies in the range space of $\cB$, it follows from Lemma \ref{lemma:lower_bound} that $\sy_i=0 \iff \lambda_i=0$. Thus, the primal iterates \eqref{primal-descent_dist_not} and \eqref{primal-descent_dist} are equivalent if $\lambda_{-1}=0$ and $\sy_{-1}=0$. Moreover, if recursion \eqref{primal-descent_dist_not}--\eqref{dual-ascent_dist_not} converges linearly to $(\sw^\star,\lambda^\star_b)$, then  recursion \eqref{primal-descent_dist}--\eqref{dual-ascent_dist} converges linearly to $(\sw^\star,\cB \lambda^\star_b)$ and its convergence properties follow from Theorem \ref{theorem1}. It remains to verify the conditions in Theorem \ref{theorem1} hold for the two cases $\rho=0$ and $\rho>0$.  For $\rho>0$, it holds that the cost $\cJ_\rho(\sw)= \cJ(\sw) + {\rho \over 2} \| \cB \sw\|^2$  is $\delta_\rho$-smooth  with $\delta_\rho=\delta+\rho \sigma^2_{\max}(\cB)$. Moreover, since the aggregate cost $\sum_{k=1}^K J_k(w): \real^M \rightarrow \real$ is $\bar{\beta}$-strongly-convex, it holds from \ref{lemma_penalized cost} that the augmented penalized cost $\cJ_\rho(\sw)$ is $\nu_{\rho}$-strongly convex with respect to $\sw^\star$. For $\rho=0$, the augmented cost $\cJ_0(\sw)=\cJ(\sw)= \sum_{k=1}^K J_k(w_k)$ is separable in $\{w_k\}$ so that $\grad \cJ_0(\sw)={\rm col}\{\grad J_k(w_k)\}_{k=1}^K$. Thus, $\cJ(\sw)$ is strongly-convex if, and only, if  each individual cost is strongly-convex. Since $J_k(w)$ is $\delta$-smooth and $\beta_k$-strongly-convex, it can be verified that $\cJ_0(\sw)$ is $\delta$-smooth and $\nu_0$-strongly-convex where $\nu_0=\min_k \beta_k$.    
\end{document}